\numberwithin{equation}{section}
\newtheorem{theorem}{Theorem}[section]
\newtheorem{lemma}[theorem]{Lemma}
\newtheorem{definition}[theorem]{Definition}
\newtheorem{remark}[theorem]{Remark}
\newcommand{\epsi}{\varepsilon}
\newcommand{\Gr}{G}
\newcommand{\D}{\ensuremath{\mathcal{D}}}
\newcommand{\G}{\ensuremath{\mathcal{G}}}
\newcommand{\F}{\ensuremath{\mathcal{F}}}
\newcommand{\Real}{\mathbb{R}}
\DeclareMathOperator{\id}{Id}
\newcommand{\dott}{\,\cdot\,}
\newcommand{\sign}{\mathop{\rm sign}}
\newcommand{\norm}[1]{\left\Vert#1\right\Vert}
\newcommand{\abs}[1]{\left\vert#1\right\vert}
\subjclass{Primary:
  35Q53, 35B35; Secondary: 35Q20}
\keywords{Two-component Camassa--Holm system, Lipschitz
  metric, conservative solutions}
\email{katrin.grunert@univie.ac.at}
\email{holden@math.ntnu.no}
\email{xavierra@cma.uio.no}
\begin{document}

\title[Lipschitz metric for the Camassa--Holm
system]{Lipschitz metric for the two-component Camassa--Holm
  system}

\maketitle

\centerline{\scshape Katrin Grunert} \medskip {\footnotesize
  \centerline{Department of Mathematical Sciences}
  \centerline{Norwegian University of Science and
    Technology} \centerline{NO-7491 Trondheim, Norway}
} 

\medskip
\centerline{\scshape Helge Holden }
\medskip 

{\footnotesize \centerline{Department of Mathematical Sciences} \centerline{Norwegian
    University of Science and Technology} \centerline{NO-7491 Trondheim, Norway} \centerline{and}
  \centerline{Centre of Mathematics for Applications} \centerline{ University of Oslo}
  \centerline{NO-0316 Oslo, Norway} }

\medskip
\centerline{\scshape Xavier Raynaud } 
\medskip 

{\footnotesize \centerline{Centre of Mathematics for Applications} \centerline{ University of Oslo}
  \centerline{NO-0316 Oslo, Norway} }

\begin{abstract}
  We construct a Lipschitz metric for conservative solutions of the Cauchy problem on the line for
  the two-component Camassa--Holm system $u_t-u_{txx}+3uu_x-2u_xu_{xx}-uu_{xxx}+\rho\rho_x=0$, and
  $\rho_t+(u\rho)_x=0$ with given initial data $(u_0, \rho_0)$.  The Lipschitz metric $d_{\D^M}$ has the
  property that for two solutions $z(t)=(u(t),\rho(t),\mu_t)$ and $\tilde z(t)=(\tilde u(t),\tilde
  \rho(t),\tilde \mu_t)$ of the system we have $d_{\D^M}(z(t),\tilde z(t))\le C_{M,T}
  d_{\D^M}(z_0,\tilde z_0)$ for $t\in[0,T]$. Here the measure $\mu_t$ is such that its absolutely
  continuous part equals the energy $(u^2+u_x^2+\rho^2)(t)dx$, and the solutions are restricted to a
  ball of radius $M$.
\end{abstract}

\section{Introduction}\label{sec:intro}

The two-component Camassa--Holm (2CH) system, which was first derived in
\cite[Eq.~(43)]{OlverRosenau}, is given by
\begin{subequations}
  \label{eq:chsys}
  \begin{align}
    \label{eq:chsys11}
    u_t-u_{txx}+3uu_x-2u_xu_{xx}-uu_{xxx}+\rho\rho_x&=0,\\
    \label{eq:chsys12}
    \rho_t+(u\rho)_x&=0,
  \end{align}
\end{subequations}
or equivalently
\begin{subequations}
  \label{eq:rewchsys10}
  \begin{align}
    \label{eq:rewchsys11}
    u_t+uu_x+P_x&=0,\\
    \label{eq:rewchsys12}
    \rho_t+(u\rho)_x&=0,
  \end{align}
\end{subequations}
where $P$ is implicitly defined by
\begin{equation}
  \label{eq:rewchsys13}
  P-P_{xx}=u^2+\frac12u_x^2+\frac12\rho^2.
\end{equation}
The Camassa--Holm equation \cite{CH,CHH} is obtained by considering the case when $\rho$ vanishes
identically.  The aim of this article is to present the construction of a Lipschitz metric for this
system on the real line with vanishing asymptotics, that is, $u\in H^1$ and $\rho\in L^2$. The
conservative solutions to \eqref{eq:rewchsys10} are constructed in \cite{GHR:12} for nonvanishing
asymptotics. A Lipschitz metric for the system with periodic boundary conditions is given in
\cite{GHR:12b}. We here combine the two approaches by constructing a Lipschitz metric for
conservative, decaying solutions. The preservation of the energy is needed in the proofs so that the
constuction of the metric only applies to vanishing asymptotics. Here we rather describe and
motivate the general ideas behind the construction, which we hope can be of interest in the study of
other related equations. For more background on the two-component Camassa--Holm system, we refer to
\cite{GHR:12} and the references therein. For related papers, see
\cite{BC,BHR,HolRay:06a,HolRay:06b}.

\section{Relaxation of the equations by the introduction of Lagrangian coordinates}
\label{sec:semilag}

The change of coordinates from Eulerian to Lagrangian coordinates has relaxation properties which
are well-known for the Burgers equation, viz.
\begin{equation}
  \label{eq:burger}
  u_t+uu_x=0.
\end{equation}
Lagrangian coordinates are defined by characteristics
\begin{equation*}
  y_t(t,\xi)=u(t,y(t,\xi)),
\end{equation*}
which give the position of a particle which moves in the velocity field $u$ and its velocity, known
as the Lagrangian velocity, is given by
\begin{equation*}
  U(t,\xi)=u(t,x), \quad x=y(t,\xi).
\end{equation*}
The method of characteristics consists of rewriting \eqref{eq:burger} in terms of the Lagrangian
variables and yields
\begin{equation}
  \begin{aligned}
    y_t&=U,\\
    U_t&=0.
  \end{aligned}\label{eq:linburger}
\end{equation}
Comparing \eqref{eq:burger} to \eqref{eq:linburger}, we observe that we start with a
\textit{nonlinear} and \textit{partial} (derivatives with respect to $t$ and $x$) differential
equation and end up with a \textit{linear} and \textit{ordinary} (derivative only with respect to
$t$) differential equation. We get rid of the nonlinear convection term, and \eqref{eq:linburger} is
nothing but Newton's law, which states that the acceleration is constant in the absence of forces. A
well-known drawback of the change of coordinates from Eulerian to Lagrangian coordinates is that it
doubles the dimension of the problem: We start with a scalar equation and end up with a system of
dimension two. This is an important issue and we will deal with it in Section \ref{sec:relab}.
However, in return, we gain the possibility to represent a larger class of objects or, more
precisely in our case, to increase the regularity of the unknown functions. Let us make this
imprecise statement clearer by an example and, to do so, we drop the dependence in $t$ in the
notation, as we look at singularities in the space variable. The function $u(x)$ can be represented
by its graph $(x,u(x))$ but this graph can itself be represented as a parametric curve, namely,
$(y(\xi), U(\xi))$ and, as we know, the set of graphs is smaller than the set of parametric
curves. As far as regularity is concerned, the Heaviside function
\begin{equation*}
  h(x)=
  \begin{cases}
    0&\text{ if }x<0,\\
    1&\text{ if }x\geq 0,
  \end{cases}
\end{equation*}
is only of bounded variation but it can be represented in Lagrangian coordinates by the following
pair of more regular (in this case Lipschitz) functions
\begin{align}
  \label{eq:initcollpeakon}
  y(\xi)&=
  \begin{cases}
    \xi&\text{ if }\xi<0,\\
    0&\text{ if }\xi\in[0,1),\\
    \xi - 1&\text{ if }\xi\geq1,\\
  \end{cases}
  &H(\xi)&=
  \begin{cases}
    0&\text{ if }\xi<0,\\
    \xi&\text{ if }\xi\in[0,1),\\
    1&\text{ if }\xi\geq1
  \end{cases}
\end{align}
Indeed, $(x,h(x))$ and $(y(\xi), H(\xi))$ represent one and the same curve, except for the vertical line
joining the origin to the point $(0,1)$. We will return to this example later. The solution of the
Camassa--Holm equation (i.e., where $\rho$ vanishes identically) experiences in general wave
breaking (i.e., loss of of regularity in the sense that the spatial derivative becomes unbounded
while keeping the $H^1$ norm finite) in finite time (\cite{cons:98,cons:98b,cons:00}) and the
antisymmetric peakon-antipeakon solution, which is described in \cite{HolRay:06b} and depicted in
Figure \ref{fig:coll}, helps us to understand how the solutions can be prolonged in a way which
preserves the energy.
\begin{figure}
  \begin{center}
    \includegraphics[width=6cm]{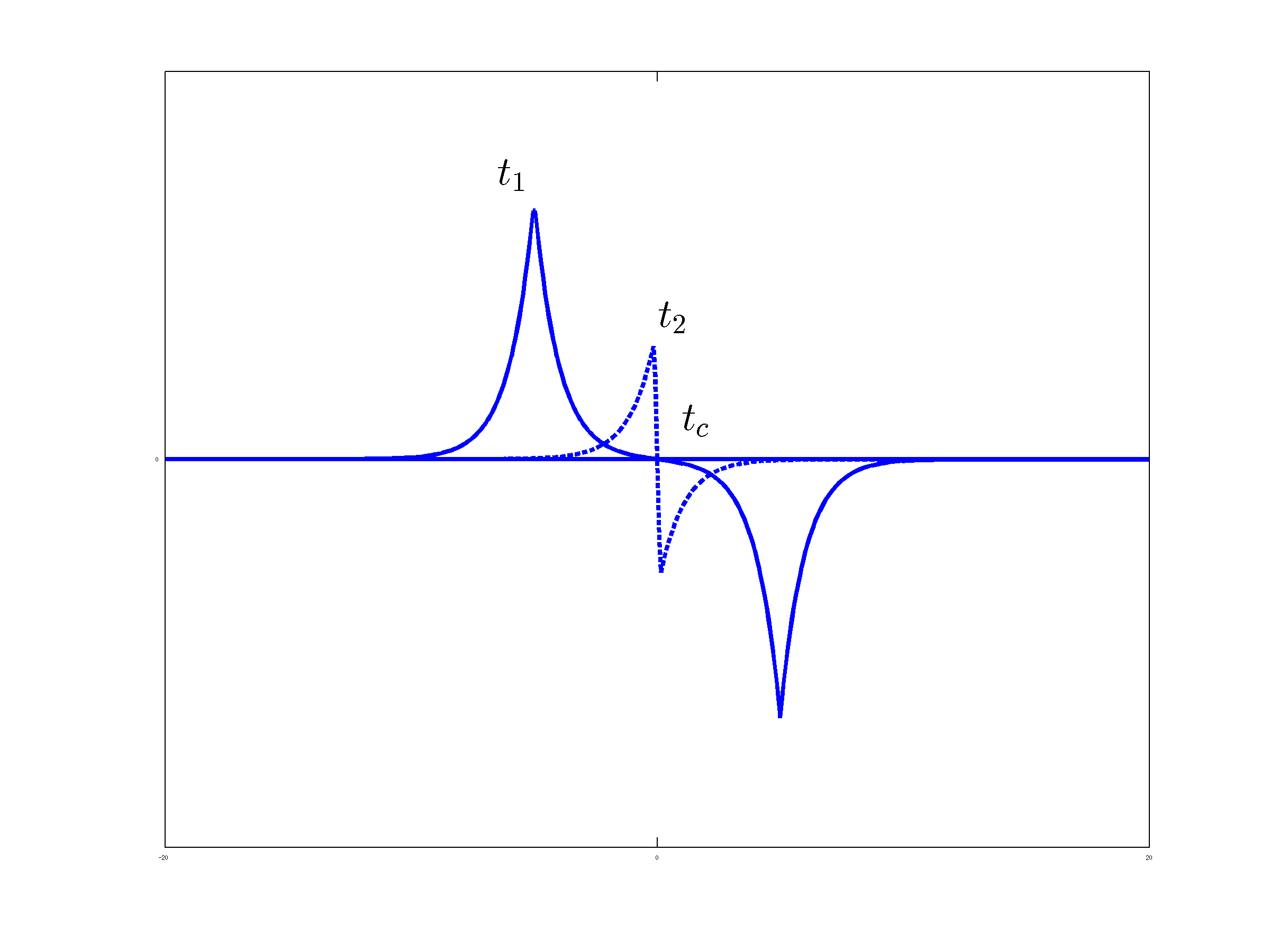}
    \includegraphics[width=6cm]{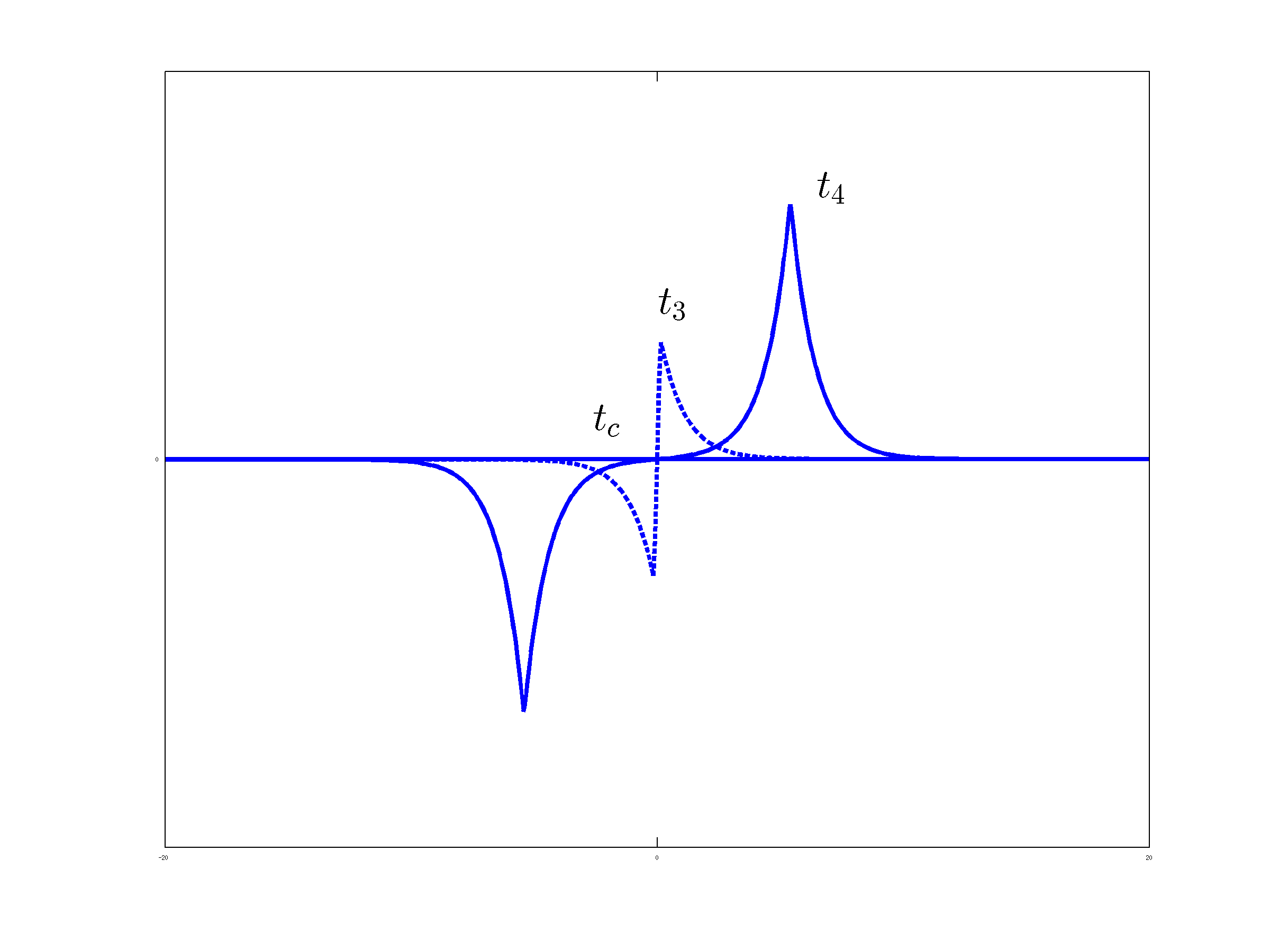}
    \caption{Anti symmetric peakon-antipeakon collision, before (on the left) and after (on the
      right) collision.}
    \label{fig:coll}
  \end{center}
\end{figure}

At collision time $t_c$, we have
\begin{align*}
  \lim_{t\to t_c}u(t, x)&=0\text{ in }L^\infty,&\lim_{t\to t_c} u_x(t,0) = -\infty,
\end{align*}
while the $H^1$ norm is constant so that $\lim_{t\to
  t_c}\norm{u(t,\dott)}_{H^1}=\norm{u(0,\dott)}_{H^1}$. To obtain the conservative solution, we need
to track the amount and the location of the concentrated energy. The function $u$ alone cannot
provide this information as $u(t_c,\dott)$ is identically zero. Thus, we have to introduce an extra
variable to describe the solutions. In Lagrangian variables, it takes the form of the
\textit{cumulative energy} $H(t,\xi)$, which is given by
\begin{equation}
  H(t,\xi)=\int_{-\infty}^{y(t,\xi)} (u^2+u_x^2+\rho^2)(x) dx.
\end{equation} 
We will introduce later its counter-part in Eulerian variables. Equation \eqref{eq:chsys12}
transports the density $\rho$. Formally, after changing variables, we have $\rho(x)\,dx =
\rho(y)\,dy = \rho(y)y_\xi\,d\xi$, so that the Lagrangian variable corresponding to $\rho$ is given
by
\begin{equation}\label{eq:rhotor}
  r(t,\xi)=\rho(t,y(t,\xi))y_\xi(t,\xi).
\end{equation}

Next, we rewrite \eqref{eq:rewchsys10} in the Lagrangian variables $(y,U,H,r)$. We obtain the
following system
\begin{equation}
  \label{eq:equivsys}
  \begin{aligned} \zeta_t &= U,\\ U_t &= -Q,\\ H_t &= U^3-2PU,\\r_t&=0,
  \end{aligned}
\end{equation} 
where $\zeta (t,\xi)=y(t,\xi)-\xi$,
\begin{equation}\label{repP} P(t,\xi)=\frac{1}{4} \int_{\Real} \exp(-\vert y(t,\xi)-y(t,\eta)\vert )
  (U^2y_\xi +H_\xi)(t,\eta)d\eta,
\end{equation} 
and
\begin{equation}\label{repQ} Q(t,\xi)=-\frac{1}{4} \int_\Real \sign(y(t,\xi)-y(t,\eta)) \exp(-\vert
  y(t,\xi)-y(t,\eta)\vert ) (U^2y_\xi +H_\xi)(t,\eta)d\eta.
\end{equation} 
See \cite{GHR:12} for more details on this derivation. After differentiation, we obtain
\begin{subequations}
  \label{eq:govsysder}
  \begin{align}
    \label{eq:govsysder1}
    y_{\xi t}& =U_\xi,\\
    \label{eq:govsysder3}
    U_{\xi t}&=\frac{1}{2}H_\xi+(\frac12 U^2-P)y_\xi, \\
    \label{eq:govsysder4}
    H_{\xi t}&=(3U^2-2P)U_\xi - 2QUy_\xi,\\
    \label{eq:govsysder5}
    r_t&=0.
  \end{align}
\end{subequations}
This system is semilinear and we recognize some features observed earlier for the Burgers equation:
We start from a nonlinear partial differential equation and we end up with a system of ordinary
differential equations which is \textit{semilinear}. We consider the system as an ordinary
differential equation because the order of the spatial derivative is the same on both sides of the
equation, so that the existence and uniqueness of solutions can be established by a contraction
argument. Finally, it is important to recall in this section the geometric nature of the
Camassa--Holm equation. The equation is a geodesic in the group of diffeomorphism for the $H^1$
norm, see, e.g., \cite{cons:01b}, as the Burgers equation for the $L^2$ norm. Using the connection
between geometry and fluid mechanics, as presented in \cite{arnold}, the function $t\mapsto
y(t,\xi)$ can then be understood as a path in the group of diffeomorphisms. Thus besides the
relaxation properties we have just described, this interpretation adds a direct geometrical
relevance to use of Lagrangian coordinates, see also \cite{EKL:11} for the system.

\section{Semigroup in Lagrangian coordinates}\label{sec:lag}

In \cite[Theorem 3.2]{GHR:12}, we prove by a contraction argument that short-time solutions to
\eqref{eq:equivsys} exist in a Banach space, which we will here denote $E$ and define as
follows. Let $V$ be the Banach space defined by
\begin{equation*} V=\{f\in L^\infty \ |\ f_\xi\in L^2\}
\end{equation*} 
and the norm of $V$ is given by $\norm{f}_V=\norm{f}_{L^\infty}+\norm{f_\xi}_{L^2}$. We set $E$
\begin{equation*} E=V\times H^1\times V\times L^2
\end{equation*} with the following norm
$\norm{X}=\norm{\zeta}_{V}+\norm{U}_{H^1}+\norm{H}_{V}+\norm{r}_{L^2}$ for any $X=(\zeta, U, H, r)\in E$. Given a constant $M>0$, we denote by $B_M$ the ball
\begin{equation}
  \label{eq:defBM} B_M=\{X\in E\ |\ \norm{X}\leq M\}.
\end{equation}

Short-time solutions of \eqref{eq:govsysder} cannot in general be extended to global solutions. The
challenge is to identify an appropriate set of initial data for which one can construct global
solutions that at the same time preserve the structure of the equations, allowing us to return to
the Eulerian variables. There are intrinsic relations between the variables in \eqref{eq:govsysder}
that need to be conserved by the solution. This is handled by the set $\G$ defined below.  In
particular, the set $\G$ is preserved by the flow.

\begin{definition}
  \label{def:F} The set $\G$ is composed of all $(\zeta, U, H, r)\in E$ such that
  \begin{subequations}
    \label{eq:lagcoord}
    \begin{align}
      \label{eq:lagcoord1} &(\zeta,U,H,r)\in \left[W^{1,\infty}\right]^3\times L^\infty,\\
      \label{eq:lagcoord2} &y_\xi\geq0, H_\xi\geq0, y_\xi+H_\xi>0 \text{ almost everywhere, and
      }\lim_{\xi\rightarrow-\infty}H(\xi)=0,\\
      \label{eq:lagcoord3} &y_\xi H_\xi=y_\xi^2U^2+U_\xi^2+r^2\text{ almost everywhere},
    \end{align}
  \end{subequations} where we denote $y(\xi)=\zeta(\xi)+\xi$.
\end{definition}
The condition $y_\xi\geq 0$ implies that the mapping $\xi\mapsto y(\xi)$ is \textit{almost} a
diffeomorphism. The solution develop singularities exactly when this mapping ceases to be a
diffeomorphism, that is, when $y_\xi=0$ in some regions. The condition \eqref{eq:lagcoord3} shows
that the variables $(y,U,H,r)$ are strongly coupled. In fact, when $y_\xi\neq0$, we can recover $H$
from \eqref{eq:lagcoord3}. It reflects the fact that $H_\xi$ represents, in Lagrangian coordinates,
the energy density of $u$ and $\rho$ (that is, $(u^2+u_x^2+\rho^2)dx$ in Eulerian coordinates) and
therefore, when the solution is smooth, it can be computed from the variables $y$, $U$, and
$r$. Note that the coupling between $H$ and $(y,U,r)$ disappears when $y_\xi=0$, which is precisely
the moment when collisions occur and when we need the information $H$ provides on the energy to
prolong the solution. The identity makes also clear the smoothing property of the Camassa--Holm
system. If $r_0\geq c>0$ for some constant $c$, this property is preserved and then $y_\xi$ never
vanishes. The solution keeps the same degree of regularity it has initially, see \cite{GHR:12}.

As in \cite[Theorem 3.6]{GHR:12}, we obtain the Lipschitz continuity of the semigroup
\begin{theorem}
  \label{th:global} For any $\bar X=(\bar y,\bar U,\bar H, \bar r)\in\G$, the system
  \eqref{eq:equivsys} admits a unique global solution $X(t)=(y(t),U(t),H(t), r(t))$ in
  $C^1(\Real_+,E)$ with initial data $\bar X=(\bar y,\bar U,\bar H,\bar r)$. We have $X(t)\in\G$ for
  all times. If we equip $\G$ with the topology induced by the $E$-norm, then the mapping
  $S\colon\G\times\Real_+\to\G$ defined by
  \begin{equation*} S_t(\bar X)=X(t)
  \end{equation*} is a Lipschitz continuous semigroup. More precisely, given $M>0$ and $T>0$, there exists a
  constant $C_M$ which depends only on $M$ and $T$ such that, for any two elements
  $X_\alpha,X_\beta\in\G\cap B_M$, we have
  \begin{equation}
    \label{eq:stabSt} \norm{S_tX_\alpha-S_tX_\beta}\leq C_M\norm{X_\alpha-X_\beta}
  \end{equation} for any $t\in[0,T]$.
\end{theorem}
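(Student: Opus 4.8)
My plan is to follow the approach developed for the related systems in \cite{GHR:12,GHR:12b}, adapted to the space $E$ with vanishing asymptotics, organized as local well-posedness, invariance of $\G$, global a priori bounds, and the Lipschitz estimate. \textbf{Local solution.} Apart from the nonlocal terms $P$ and $Q$, the right-hand side of \eqref{eq:equivsys} is polynomial in $(\zeta,U,H,r)$, so the crucial point is that $X\mapsto P$ and $X\mapsto Q$ from \eqref{repP}--\eqref{repQ} are locally Lipschitz on $E$. Using $|{\E}^{-|a|}-{\E}^{-|b|}|\le|a-b|$ for the exponential kernels and splitting the weight $U^2y_\xi+H_\xi$ into its $L^1$ and $L^\infty$ parts, one gets $\norm{P}+\norm{Q}\le C(\norm{X})$ together with $\norm{P_\alpha-P_\beta}+\norm{Q_\alpha-Q_\beta}\le C(M)\norm{X_\alpha-X_\beta}$ on $B_M$. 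A standard contraction argument in $E$ then produces a unique solution $X\in C^1([0,T_0),E)$ on a maximal interval $[0,T_0)$.

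\textbf{Invariance of $\G$.} For each fixed $\xi$, the triple $(y_\xi,U_\xi,H_\xi)(t,\xi)$ solves, by \eqref{eq:govsysder}, a linear ODE system with coefficients depending only on $U,P,Q$; from this I would show by a continuity argument that $y_\xi\ge 0$ and $y_\xi+H_\xi>0$ are propagated, and then $H_\xi\ge 0$ follows from \eqref{eq:lagcoord3}. A direct computation with \eqref{eq:govsysder} gives $\frac{d}{dt}\big(y_\xi H_\xi-y_\xi^2U^2-U_\xi^2-r^2\big)=0$, so \eqref{eq:lagcoord3} persists, while $\lim_{\xi\to-\infty}H=0$ and the $W^{1,\infty}$/$H^1$/$L^2$ membership persist because $U(t,\dott)\in H^1$ vanishes at $\pm\infty$ and the flow preserves these spaces. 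Hence $X(t)\in\G$ throughout $[0,T_0)$.

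\textbf{Global bounds.} Since $U(t,\dott)\in H^1$ vanishes at $\pm\infty$ and $P$ is bounded, integrating \eqref{eq:govsysder4} by parts shows $\frac{d}{dt}\int_\Real H_\xi\,d\xi=0$, i.e., the total energy $H(t,\infty)=\int_\Real H_\xi\,d\xi$ is conserved, and so $\norm{H(t)}_{L^\infty}\le H(0,\infty)\le M$. Combined with the pointwise bound $U^2y_\xi\le H_\xi$ coming from \eqref{eq:lagcoord3}, this yields $\norm{P(t)}_{L^\infty}+\norm{Q(t)}_{L^\infty}\le C(M)$ for all $t$. Then $\norm{U(t)}_{L^\infty}$ and $\norm{\zeta(t)}_{L^\infty}$ grow at most polynomially via $U_t=-Q$ and $\zeta_t=U$, $\norm{r(t)}_{L^2}$ is constant, and a Gronwall estimate for $\norm{y_\xi}_{L^2}^2+\norm{U_\xi}_{L^2}^2+\norm{H_\xi}_{L^2}^2$ built from \eqref{eq:govsysder}, with coefficients now controlled, keeps $\norm{X(t)}$ finite on $[0,T]$ with a bound $C_{M,T}$ depending only on $M$ and $T$. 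Thus $T_0=\infty$.

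\textbf{Lipschitz estimate and main difficulty.} Given $X_\alpha,X_\beta\in\G\cap B_M$, the previous step places both solutions in a common ball $B_{C_{M,T}}$ on $[0,T]$; subtracting the equations \eqref{eq:equivsys}, estimating $P_\alpha-P_\beta$ and $Q_\alpha-Q_\beta$ by the Lipschitz bounds above (now with constant depending on $C_{M,T}$), and applying Gronwall yields \eqref{eq:stabSt}. The main obstacle is exactly this Lipschitz estimate for the nonlocal terms: one must control $P_\alpha-P_\beta$ through the difference of kernels ${\E}^{-|y_\alpha(\xi)-y_\alpha(\eta)|}-{\E}^{-|y_\beta(\xi)-y_\beta(\eta)|}$, and for $Q$ the additional $\sign$ factor, while simultaneously tracking the difference of the weights $U^2y_\xi+H_\xi$ in both $L^1$ and $L^\infty$. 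This is the technical core worked out in \cite{GHR:12}; here the vanishing asymptotics are an advantage, as they render the total energy an exact conserved quantity.
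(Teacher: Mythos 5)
Your proposal is correct and follows essentially the same route as the paper, which does not prove Theorem \ref{th:global} directly but defers it to \cite[Theorems 3.2 and 3.6]{GHR:12}: local existence by contraction using the local Lipschitz continuity of $P$ and $Q$ on bounded sets of $E$, propagation of the constraints defining $\G$ (including the exact invariant $\frac{d}{dt}(y_\xi H_\xi-y_\xi^2U^2-U_\xi^2-r^2)=0$), globalization via conservation of the total energy $H(t,\infty)$, and a Gronwall argument for the stability estimate \eqref{eq:stabSt}. Your identification of the nonlocal terms $P$, $Q$ as the technical core matches the treatment in the cited reference.
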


\section{Relabeling symmetry}\label{sec:relab}

The equations are well-posed in Lagrangian coordinates. We want to transport this result back to
Eulerian coordinates. If the two sets of coordinates were in bijection, then it would be
straightforward but, as mentioned earlier, Lagrangian coordinates increase the number of unknowns
from two ($u$ and $\rho$) to four (the components of $X$), which indicates that such a bijection
does not exist. There exists a redundancy in Lagrangian coordinates and the goal of this section is
precisely to identify this redundancy, in order to be able to define the correct equivalence
classes. This redundancy is also present in the case of the Burgers equation when we define the
Cauchy problem for both \eqref{eq:burger} and \eqref{eq:linburger}. To the initial condition
$u(0,x)=u_0(x)$ for \eqref{eq:burger}, there corresponds infinitely many parametrizations of the
initial conditions for \eqref{eq:linburger} given by
\begin{align*}
  y(0,\xi)&=f(\xi),&U(0,\xi)=u_0(f(\xi)),
\end{align*}
for an arbitrary diffeomorphism $f$. As also mentioned earlier, the representation of a graph is
uniquely defined by a single function while there are infinitely many different parametrizations of
any given curve. We will use the term \textit{relabeling} for this lack of uniqueness in the
characterization of one and the same curve.

We now define the relabeling functions as follows.
\begin{definition}
  \label{def:Gr} We denote by $\Gr$ the subgroup of the group of homeomorphisms from $\Real$ to
  $\Real$ such that
  \begin{subequations}
    \label{eq:Hcond}
    \begin{align}
      \label{eq:Hcond1} f-\id\text{ and }f^{-1}-\id &\text{ both belong to }W^{1,\infty},\\
      \label{eq:Hcond2} f_\xi-1& \text{ belongs to }L^2,
    \end{align}
  \end{subequations} where $\id$ denotes the identity function. Given $\kappa>0$, we denote by
  $\Gr_\kappa$ the subset $\Gr_\kappa$ of $\Gr$ defined by
  \begin{equation*} \Gr_\kappa=\{f\in\Gr\ |\
    \norm{f-\id}_{W^{1,\infty}}+\norm{f^{-1}-\id}_{W^{1,\infty}}\leq\kappa\}.
  \end{equation*}
\end{definition}
We refine the definition of $\G$ in Definition \ref{def:F} by introducing the subsets $\F_\kappa$
and $\F$ as
\begin{equation*}
  \F_\kappa=\{X=(y,U,H,r)\in\G\ |\ y+H\in \Gr_\kappa\},
\end{equation*} and
\begin{equation}
  \label{eq:defF}
  \F=\{X=(y,U,H,r)\in\G\ |\ y+H\in \Gr\}.
\end{equation} 
The regularity requirement on the relabeling functions given in Definition \ref{def:Gr} and the
definition of $\F$ are introduced in order to be able to define the action of $\Gr$ on $\F$, that is,
for any $X=(y,U,H,r)\in\F$ and any function $f\in\Gr$, the function $(y\circ f,U\circ f,H\circ f,
r\circ f f_\xi)$ belongs to $\F$ and we will denote it by $X\circ f$.  This corresponds to the
relabeling action. Note that relabeling acts differently on \textit{primary} functions, as $y$, $U$
and $H$ (in this case, we have $(U, f)\mapsto U\circ f$) and on \textit{derivatives} or
\textit{densities}, as $y_\xi$, $U_\xi$, $H_\xi$ and $r$ (in that case we have $(r, f)\mapsto r\circ
f f_\xi$). The space $\F$ is preserved by the governing equation \eqref{eq:equivsys} and, as
expected,
the semigroup of solutions in Lagrangian coordinates preserves relabeling, i.e., we have the
following result.
\begin{lemma}[{\cite[Theorem 4.8]{GHR:12}}]
  \label{lem:equivPi} The mapping $S_t$ is equivariant, that is,
  \begin{equation*} S_t(X\circ f)=S_t(X)\circ f
  \end{equation*}
  for any $X\in\F$ and $f\in \Gr$.
\end{lemma}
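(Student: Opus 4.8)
The plan is to deduce the equivariance from the uniqueness statement in Theorem~\ref{th:global}, once one knows that the nonlocal operators $P$ and $Q$ of \eqref{repP}--\eqref{repQ} intertwine with the relabeling action. Fix $X=(y,U,H,r)\in\F$ and $f\in\Gr$. Since $\F$ is preserved by relabeling, $X\circ f\in\F\subset\G$, so Theorem~\ref{th:global} furnishes a unique solution $t\mapsto S_t(X\circ f)\in C^1(\Real_+,E)$ with $S_0(X\circ f)=X\circ f$. Writing $X(t)=S_t(X)$, I would introduce the candidate
\[
  Y(t)=\big(y(t)\circ f,\;U(t)\circ f,\;H(t)\circ f,\;(r(t)\circ f)\,f_\xi\big)=X(t)\circ f
\]
and show that $Y$ is likewise a $C^1(\Real_+,E)$ solution of \eqref{eq:equivsys} with $Y(0)=X\circ f$; the uniqueness in Theorem~\ref{th:global} then forces $Y(t)=S_t(X\circ f)$, which is precisely the assertion $S_t(X)\circ f=S_t(X\circ f)$.

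I would first record two preliminary facts. The first is that $Z\mapsto Z\circ f$ is a continuous (affine) self-map of $E$: \eqref{eq:Hcond} guarantees that $f-\id\in V$ and that $f_\xi\in L^\infty$ is bounded below by a positive constant, so the substitution $\eta=f(\xi)$ controls each of the $V$-, $H^1$- and $L^2$-norms entering $\norm{\cdot}$; consequently $t\mapsto Y(t)$ inherits the $C^1(\Real_+,E)$-regularity of $t\mapsto X(t)$, with $\partial_t Y(t)$ obtained from $\partial_t X(t)$ by the same relabeling (on the first component this reads $\partial_t\big(y(t)\circ f-\id\big)=y_t(t)\circ f=U(t)\circ f$, the time-independent shift $f-\id$ dropping out). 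The second is the equivariance of the nonlocal terms: with $\tilde y=y\circ f$, $\tilde U=U\circ f$, $\tilde H=H\circ f$ one has $\tilde y_\xi=(y_\xi\circ f)f_\xi$ and $\tilde H_\xi=(H_\xi\circ f)f_\xi$, hence $\tilde U^2\tilde y_\xi+\tilde H_\xi=\big((U^2y_\xi+H_\xi)\circ f\big)f_\xi$; inserting this into \eqref{repP} and performing the change of variables $\eta\mapsto f(\eta)$ (licit by the Lipschitz change-of-variables formula, $f$ being an increasing bi-Lipschitz homeomorphism of $\Real$ with $d(f(\eta))=f_\xi(\eta)\,d\eta$) gives $P[X\circ f](\xi)=P[X](f(\xi))$, and the identical computation in \eqref{repQ}---after which $\sign(\tilde y(\xi)-\tilde y(\eta))$ becomes $\sign(y(f(\xi))-y(\eta'))$ under $\eta'=f(\eta)$---gives $Q[X\circ f](\xi)=Q[X](f(\xi))$.

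Granting these, checking that $Y$ solves \eqref{eq:equivsys} is a routine substitution: the $\zeta$- and $r$-equations come from $y_t=U$ and $r_t=0$ composed with $f$ (using $\partial_t\big((r(t)\circ f)f_\xi\big)=(r_t(t)\circ f)f_\xi=0$), while the $U$- and $H$-equations follow from the equivariance of $Q$ and $P$ together with $Y(t)=X(t)\circ f$, e.g.\ $\partial_t\big(U(t)\circ f\big)=U_t(t)\circ f=-\big(Q[X(t)]\circ f\big)=-Q[X(t)\circ f]=-Q[Y(t)]$ and $\partial_t\big(H(t)\circ f\big)=(U^3-2PU)(t)\circ f=(U(t)\circ f)^3-2P[Y(t)]\,(U(t)\circ f)$. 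Since $Y(0)=X(0)\circ f=X\circ f$, uniqueness then yields $Y=S_\bullet(X\circ f)$, proving the lemma. I expect the only delicate point to be the bookkeeping in the two preliminary facts---in particular, justifying the change of variables in the nonlocal integrals when $f_\xi$ is merely in $L^\infty$ (but bounded away from $0$), and verifying that relabeling acts boundedly on the whole space $E$---since, once $P$ and $Q$ are known to be equivariant, the substitution into \eqref{eq:equivsys} is purely formal.
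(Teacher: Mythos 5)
The paper does not prove this lemma itself---it is imported from \cite[Theorem 4.8]{GHR:12}---but your argument is correct and is essentially the proof given there: one shows that $P$ and $Q$ commute with relabeling via the substitution $\eta\mapsto f(\eta)$ (using that $U^2y_\xi+H_\xi$ transforms as a density and $|y(\xi)-y(\eta)|$ is invariant), checks that $X(t)\circ f$ is a $C^1(\Real_+,E)$ solution of \eqref{eq:equivsys} with initial data $X\circ f$, and concludes by the uniqueness in Theorem \ref{th:global}. The two technical points you flag (boundedness of $Z\mapsto Z\circ f$ on $E$ and the validity of the bi-Lipschitz change of variables, both consequences of \eqref{eq:Hcond}, which forces $f_\xi$ to be bounded above and below away from zero) are exactly the ones that need checking, and they go through as you describe.
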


Now that we have identified the redundancy of Lagrangian coordinates as the action of relabeling, we
want to handle it by considering equivalence classes. However, equivalence classes are rather
abstract objects which will be hard to work with from an analytical point of view. We consider
instead the section defined by $\F_0$, which contains one and only one representative for each
equivalence class, so that the quotient $\F/\Gr$ is in bijection with $\F_0$. Let us denote by $\Pi$
the projection of $\F$ into $\F_0$ defined as
\begin{equation*}
  \Pi(X)=X\circ(y+H)^{-1}
\end{equation*} 
for any $X=(y,U,H,r)\in\F$. By definition, we have that $X$ and $\Pi(X)$ belong to the same
equivalence class. We can check that the mapping $\Pi$ is a projection, i.e., $\Pi\circ\Pi = \Pi$,
and that it is also invariant, i.e., $\Pi(X\circ f)=\Pi(X)$. It follows that the mapping $[X]\mapsto
\Pi(X)$ is a bijection from $\F/\Gr$ to $\F_0$. 

\section{Eulerian coordinates}\label{sec:euler}

In the method of characteristics, once the equation is solved in Lagrangian coordinates, we recover
the solution in Eulerian coordinates by setting $u(t,x) = U(t,y^{-1}(t,x))$, where $y^{-1}(t,x)$
denotes---assuming it exists---the inverse of $\xi\mapsto y(t,\xi)$. The Burgers equation and the
Camassa--Holm equation develop singularity because $y$ does not remain invertible. In the case of
the Burgers equation, $u$ becomes discontinuous but the Camassa--Holm equation enjoys more
regularity and $u$ remains continuous. This is a consequence of the preservation of the $H^1$ norm,
but it can also be seen from the Lagrangian point of view. Indeed, even if $y$ is not invertible, we
can define $u(t,x)$ as
\begin{equation*}
  u(t,x) = U(t,\xi)\text{ for any }\xi\text{ such that }x=y(t,\xi).
\end{equation*}
This is well-defined because if there exist $\xi_1$ and $\xi_2$ such that $x=y(t,\xi_1)=y(t,\xi_2)$,
then $y_\xi(t,\xi)=0$ for all $\xi\in[\xi_1,\xi_2]$ because $y$ is non-decreasing, see
\eqref{eq:lagcoord2}. Then, by \eqref{eq:lagcoord3}, we get $U_\xi(t,\xi)=0$ so that
$U(t,\xi_1)=U(t,\xi_2)$. Furthermore, as we explained earlier in the case of a peakon-antipeakon
collision, some information is needed about the energy to prolong the solution after 
collision. If $y$ is invertible, we recover the energy density in Eulerian coordinates as
\begin{equation}
  \label{eq:euldefenerg1}
  (u^2+u_x^2+\rho^2)\,dx= \frac{H_\xi}{y_\xi}\circ y^{-1}\,d\xi,
\end{equation}
which corresponds to the push-forward of the measure $H_\xi\,d\xi$ with respect to $y$, i.e.,
\begin{equation}
  \label{eq:euldefenerg2}
  (u^2+u_x^2+\rho^2)\,dx = y_\#(H_\xi\,d\xi).
\end{equation}
However, when $y$ is not invertible \eqref{eq:euldefenerg1} cannot be used and $y_\#(H_\xi\,d\xi)$
may not be absolutely continuous so that \eqref{eq:euldefenerg2} will not hold either. It motivates
the introduction of the energy $\mu$ defined here as $y_\#(H_\xi\,d\xi)$, which represents the
energy of the system. The set $\D$ of Eulerian coordinates is defined as follows.
\begin{definition}
  \label{def:D} The set $\D$ consists of all triples $(u,\rho,\mu)$ such that
  \begin{enumerate}
  \item $u\in H^1$, $\rho\in L^2$, and
  \item $\mu$ is a positive Radon measure whose absolutely continuous part, $\mu_{ac}$, satisfies
    \begin{equation}
      \label{eq:relmumuac}
      \mu_{ac}=(u^2+u_x^2+\rho^2)dx.
    \end{equation}
  \end{enumerate}
\end{definition}

It can be shown (see \cite[Section 4]{GHR:12}) that the identity \eqref{eq:lagcoord3} is somehow
equivalent to \eqref{eq:relmumuac} but it is clear that, from an analytical point of view, it easier
to deal with an algebraic identity like \eqref{eq:lagcoord3} than with a property like
\eqref{eq:relmumuac} which immediately requires tools from measure theory. We can show that $\D$ and
$\F_0$ are in bijection, and the mappings between the two are given in the following definition. The
first one has been already explained.
\begin{definition} Given any element $X $ in $\F_0$, then $(u,\rho,\mu)$ defined as follows
  \begin{equation*}
    u(x)=U(\xi) \text{ for any } \xi \text{ such that } x=y(\xi),
  \end{equation*}
  \begin{equation*}
    \rho(x)=y_\#(r d\xi),\quad \mu=y_\#(H_\xi d\xi),
  \end{equation*} belongs to $\D$. We denote by $M:\F_0\to \D$ the map which to any $X$ in $\F_0$
  associates $(u,\rho,\mu)$.
\end{definition}
The mapping, which we denoted by $L$, from $\D$ to $\F_0$ is defined as follows.
\begin{definition}
  \label{def:L}
  For any $(u,\rho,\mu)$ in $\D$ let
  \begin{equation}
    \label{eq:defL}
    \left\{
      \begin{aligned} y(\xi)&=\sup\{y \mid \mu((-\infty,y))+y<\xi\},\\ 
        H(\xi)& =\xi-y(\xi),\\
        U(\xi)&=u\circ y(\xi),\\
        r(\xi)&=\rho\circ y(\xi)y_\xi(\xi).
      \end{aligned} \right.
  \end{equation}
\end{definition}
We can see that the lack of regularity of $u$, which will occur when $\mu$ is singular or very
large, is transformed into regions where the function $y$ is constant or almost constant. Using the
relabeling degree of freedom, we manage to rewrite functions in $L^2$ and measures as bounded
functions (in $L^\infty$). For example, for the peakon-antipeakon collision depicted in Figure
\ref{fig:coll}, the initial data given by $u_0(x)=\rho_0(x)=0$ and $\mu=\delta(x)\,dx$, which
corresponds to the collision time, $t_c$, when the total energy is equal to one, yields
$r(\xi)=U(\xi)=0$ with $y(\xi)$ and $H(\xi)$ as defined in \eqref{eq:initcollpeakon}. We can check that,
in this case $\delta(x)\,dx=y_\#(H_\xi\,d\xi)$. Finally, we define the semigroup $T_t$ of
conservative solutions in the original Eulerian variables $\D$ as
\begin{equation*}
  T_t:=M\Pi S_tL. 
\end{equation*}

\section{Lipschitz metric for the semigroup}\label{sec:lip}

We apply the construction of the semigroup $T_t$ in Section \ref{sec:euler}, and we can check, as
done in \cite[Theorem 5.2]{GHR:12}, that, for given initial data $(u_0,\rho_0,\mu_0)$, if we denote
$(u(t), \rho(t), \mu_t)=T_t(u_0,\rho_0,\mu_0)$, then $(u, \rho)$ are weak solutions to
\eqref{eq:rewchsys10}.  Moreover,
\begin{equation*}
  \mu_t(\Real)=\mu_0(\Real)
\end{equation*}
so that the solutions are conservative. Our goal is to define a metric on $\D$ which makes the
semigroup Lipschitz continuous. The Lipschitz continuity is a property of a semigroup which can be
used to establish its uniqueness, see \cite{Bressan} and \cite[Theorem 2.9]{Bressan:book}. By our construction, a metric for the semigroup $T_t$ is
readily available. We can simply transport the topology of the Banach space $E$ from $\F_0$ to $\D$
and obtain, for two elements $(u,\rho,\mu)$ and $(\tilde u,\tilde \rho,\tilde \mu)$,
\begin{equation}
  \label{eq:defdD1}
  d_\D\big((u,\rho,\mu),(\tilde u,\tilde \rho,\tilde \mu)\big) = \norm{L(u,\rho,\mu)-L(\tilde u,\tilde \rho,\tilde \mu)}_E.
\end{equation}
We have
\begin{equation*}
  d_\D\big(T_t(u,\rho,\mu),T_t(\tilde u,\tilde \rho,\tilde \mu)\big) = \norm{\Pi S_tL(u,\rho,\mu)-\Pi S_t L(\tilde u,\tilde \rho,\tilde \mu)}_E.
\end{equation*}
It can be proven that the projection $\Pi$ is continuous (see \cite[Lemma 4.6]{GHR:12}), but it is
not Lipschitz (at least, we have been unable to prove it). Thus, even if $S_t$ is Lipschitz
continuous, the semigroup $T_t$ is only continuous with respect to the metric $d_\D$ defined by
\eqref{eq:defdD1}. In the definition \eqref{eq:defdD1} of the metric, we let the section $\F_0$ play
a special role, but this section is arbitrarily chosen. The set $\F_0$ is by construction nonlinear
(because of \eqref{eq:lagcoord3}) and to use a linear norm to measure distances does not respect
that. In fact, we want to measure the distance between equivalence classes. A natural starting point
is to define, for $X_\alpha,X_\beta\in\F$, $\bar J(X_\alpha,X_\beta)$ as
\begin{equation}
  \label{eq:defJXX}
  \bar J(X_\alpha,X_\beta)=\inf_{f, g\in\Gr}\norm{X_\alpha\circ f-X_\beta\circ
    g}.
\end{equation}
The function $\bar J$ is relabeling invariant, that is, $\bar J(X_\alpha\circ f,X_\beta\circ g)=\bar
J(X_\alpha,X_\beta)$ and  measures precisely the distance between two equivalence classes. However, we
have to deal with the fact that the linear norm of $E$ does not play well with relabeling: It is not
invariant with respect to relabeling, i.e., we do not have
\begin{equation}
  \label{eq:invnorm}
  \norm{X\circ f} = \norm{X}.
\end{equation}
However,  such a norm exists. Let
\begin{equation*}
  B=\{X\in L^\infty\ |\ X_\xi \in L^1\}.
\end{equation*}
Then,
\begin{equation*}
  \norm{X\circ f}_{B} =\norm{X\circ f}_{L^\infty} + \norm{X_\xi\circ f f_\xi}_{L^1} = \norm{X}_{L^\infty} + \norm{X_\xi}_{L^1} = \norm{X}_{B}.
\end{equation*}
To cope with the lack of relabeling invariance of $\bar J$, we introduce $J$ defined as follows.
\begin{definition}
  \label{def:J} 
  Let $X_\alpha,X_\beta\in\F$, we define $J(X_\alpha,X_\beta)$ as
  \begin{equation}
    \label{eq:defJ} J(X_\alpha,X_\beta)=\inf_{f_1,f_2\in\Gr}\big(\norm{X_\alpha\circ
      f_1-X_\beta}+\norm{X_\alpha-X_\beta\circ f_2}\big).
  \end{equation}
\end{definition}
The function $J$ is not relabeling invariant, but we have $J(X_\alpha,X_\beta)=0$ if $X_\alpha$ and
$X_\beta$ both belong to the same equivalence class. Moreover, the relabeling invariance is not strictly
needed for our purpose and the following weaker property is enough. Given $X_\alpha,X_\beta\in \F$
and $f\in\Gr_\kappa$, we have
\begin{equation}
  \label{eq:Jrelabbound} J(X_\alpha\circ f,X_\beta\circ f)\leq CJ(X_\alpha,X_\beta)
\end{equation} 
for some constant $C$ which depends only on $\kappa$, see \cite{GHR:13}.  Note that, if the norm $E$
were invariant, that is, \eqref{eq:invnorm} were fulfilled, then the function $J$ and $\bar J$ would be
equivalent, because we would have $\bar J\leq J\leq 2\bar J$.

\begin{remark}
  We will make use of the following notation. The variable $X$ is used as a standard notation for
  $(y,U,H,r)$. By the $L^\infty$ norm of $X$, we mean
  \begin{equation}
    \label{eq:notation1}
    \norm{X}_{L^\infty}=\norm{y-\id}_{L^\infty}+\norm{U}_{L^\infty}+\norm{H}_{L^\infty},
  \end{equation}
  and, by the $L^2$ norm of the derivative $X_\xi$, we mean
  \begin{equation}
    \label{eq:notation2}
    \norm{X_\xi}_{L^2}=\norm{y_\xi-1}_{L^2}+\norm{U_\xi}_{L^2}+\norm{H_\xi}_{L^2}+\norm{r}_{L^2},
  \end{equation}
  and, similarly,
  \begin{equation}
    \label{eq:notation3}
    \norm{X_\xi}_{L^\infty}=\norm{y_\xi-1}_{L^\infty}+\norm{U_\xi}_{L^\infty}+\norm{H_\xi}_{L^\infty}+\norm{r}_{L^\infty}.
  \end{equation}
\end{remark}
From $J$, we obtain a metric $d$ by the following construction.
\begin{definition} Let $X_\alpha,X_\beta\in\F_0$, we define $d(X_\alpha,X_\beta)$ as
  \begin{equation}
    \label{eq:defdist} d(X_\alpha,X_\beta)=\inf \sum_{i=1}^NJ(X_{n-1},X_n)
  \end{equation} where the infimum is taken over all finite sequences $\{X_n\}_{n=0}^N\subset\F_0$ which
  satisfy $X_0=X_\alpha$ and $X_N=X_\beta$.
\end{definition}

\begin{lemma}
  \label{lem:distance}
  The mapping $d:\F_0\times\F_0\to\Real_+$ is a distance on $\F_0$, which is bounded as follows
  \begin{equation}
    \label{eq:dequiv} \frac12\norm{X_\alpha-X_\beta}_{L^\infty}\leq
    d(X_\alpha,X_\beta)\leq2\norm{X_\alpha-X_\beta}.
  \end{equation}
\end{lemma}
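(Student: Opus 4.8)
The plan is to check the metric axioms, which are largely formal, and then to establish the two-sided estimate \eqref{eq:dequiv}; the separation of points $d(X_\alpha,X_\beta)=0\Rightarrow X_\alpha=X_\beta$ will come out of the lower bound. Non-negativity of $d$ is clear since $J\geq0$. Symmetry of $J$ (swap $f_1$ and $f_2$ in \eqref{eq:defJ}) together with the possibility of reversing chains gives symmetry of $d$, and the triangle inequality is built into \eqref{eq:defdist}: concatenating an admissible chain from $X_\alpha$ to $X_\gamma$ with one from $X_\gamma$ to $X_\beta$ is again an admissible chain from $X_\alpha$ to $X_\beta$. For the upper bound I would use the one-step chain $X_0=X_\alpha$, $X_1=X_\beta$, so that $d(X_\alpha,X_\beta)\leq J(X_\alpha,X_\beta)$, and then take $f_1=f_2=\id$ in \eqref{eq:defJ} to get $J(X_\alpha,X_\beta)\leq2\norm{X_\alpha-X_\beta}$; the same choice shows $J(X_\alpha,X_\alpha)=0$, hence $d(X_\alpha,X_\alpha)=0$.

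The core of the proof is the lower bound, which I would deduce from the single-link estimate $\frac12\norm{X_\alpha-X_\beta}_{L^\infty}\leq J(X_\alpha,X_\beta)$ valid for all $X_\alpha,X_\beta\in\F_0$. Granting it, for any admissible chain $\{X_n\}_{n=0}^N$ the triangle inequality for $\norm{\dott}_{L^\infty}$ gives $\frac12\norm{X_\alpha-X_\beta}_{L^\infty}\leq\frac12\sum_n\norm{X_{n-1}-X_n}_{L^\infty}\leq\sum_n J(X_{n-1},X_n)$, and passing to the infimum over chains yields the claimed bound. In particular $d(X_\alpha,X_\beta)=0$ forces $y_\alpha=y_\beta$, $U_\alpha=U_\beta$, $H_\alpha=H_\beta$; that $r_\alpha=r_\beta$ then follows because \eqref{eq:lagcoord3} forces $r=0$ on $\{y_\xi=0\}$, so a relabeling in \eqref{eq:defJ} keeping the $y$-components close leaves $r$ essentially unchanged, and $d$ is then indeed a distance.

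To prove the single-link estimate I would exploit the rigidity of the section $\F_0$. Since $y+H=\id$ there, $y_\xi+H_\xi=1$ with $y_\xi,H_\xi\geq0$, so $y$ and $H$ are non-decreasing and $1$-Lipschitz; and \eqref{eq:lagcoord3} together with the arithmetic--geometric mean inequality gives $U_\xi^2\leq y_\xi H_\xi\leq\frac14$, so $U$ is $\frac12$-Lipschitz. Hence $\norm{X\circ f-X}_{L^\infty}\leq\frac52\norm{f-\id}_{L^\infty}$ for any $X\in\F_0$ and $f\in\Gr$, while, since $(y_\alpha\circ f-y_\beta)+(H_\alpha\circ f-H_\beta)=f-\id$ for $X_\alpha,X_\beta\in\F_0$, we also have $\norm{f-\id}_{L^\infty}\leq\norm{X_\alpha\circ f-X_\beta}_{L^\infty}$. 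Writing $N=\norm{X_\alpha-X_\beta}_{L^\infty}$ and $\delta=\norm{f_1-\id}_{L^\infty}$, the first term of \eqref{eq:defJ} satisfies $\norm{X_\alpha\circ f_1-X_\beta}\geq\max(\delta,\,N-\frac52\delta)\geq\frac27N$ for every $f_1$, and by the symmetric argument the second term is $\geq\frac27N$; adding them, $J(X_\alpha,X_\beta)\geq\frac47N\geq\frac12N$.

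The main obstacle is precisely this relabeling-robust lower bound on $J$: one has to rule out that composing $X_\alpha$ or $X_\beta$ with some $f$ far from the identity makes $X_\alpha\circ f$ and $X_\beta$ artificially close in the $E$-norm. The two displayed inequalities are exactly what prevents this, and both rely essentially on working in the section $\F_0$ — on the identity $y+H=\id$ and on the a priori Lipschitz bounds $0\leq y_\xi,H_\xi\leq1$ and $|U_\xi|\leq\frac12$ read off from \eqref{eq:lagcoord3}. A secondary delicate point, flagged above, is that the $E$-norm does not by itself control $r$ under relabeling, so the separation of points in the $r$-direction must be routed through \eqref{eq:lagcoord3} as well.
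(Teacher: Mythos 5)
Your treatment of the metric axioms, the upper bound, and the lower bound in \eqref{eq:dequiv} is sound and essentially the paper's argument: the paper likewise bounds $\norm{X_\alpha-X_\beta}_{L^\infty}\leq \norm{X_{\alpha,\xi}}_{L^\infty}\norm{f-\id}_{L^\infty}+\norm{X_\alpha\circ f-X_\beta}_{L^\infty}$, uses $f-\id=(y_\alpha+H_\alpha)\circ f-(y_\beta+H_\beta)$ to control $\norm{f-\id}_{L^\infty}$ by $\norm{X_\alpha\circ f-X_\beta}_{L^\infty}$, and then sums over the chain. (You get a slightly better constant by using $\abs{U_\xi}\leq\tfrac12$ from the AM--GM inequality where the paper is content with $\abs{U_\xi}\leq1$; this is immaterial.)

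The genuine gap is in the separation of points in the $r$-component. Note first that the $L^\infty$ quantity in \eqref{eq:notation1} contains no $r$-term, so the lower bound in \eqref{eq:dequiv} gives only $(y_\alpha,U_\alpha,H_\alpha)=(y_\beta,U_\beta,H_\beta)$, and \eqref{eq:lagcoord3} then yields $r_\alpha^2=r_\beta^2$ a.e.\ --- which leaves the sign of $r$ undetermined wherever $y_\xi>0$. Your remark that $r=0$ on $\{y_\xi=0\}$ does not address this, and the statement that ``a relabeling keeping the $y$-components close leaves $r$ essentially unchanged'' is not a proof: $d(X_\alpha,X_\beta)=0$ only provides chains $X_0,\dots,X_N$ with $\sum_n J(X_{n-1},X_n)<\epsi$, where $N$ may grow as $\epsi\to0$, and the quantity $\norm{r_{n-1}\circ f f_\xi-r_n}_{L^2}$ appearing in each link does not telescope in $L^2$ because $\norm{r\circ f f_\xi}_{L^2}$ is not relabeling-invariant. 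The paper resolves this by passing to the primitives $R_\alpha(\xi)=\int_{-\infty}^\xi r_\alpha(\eta)e^{-\abs{\eta}}\,d\eta$, which \emph{do} telescope in $L^\infty$: it proves the link estimate \eqref{eq:bdralphabeta}, $\norm{R_{\alpha}-R_{\beta}}_{L^\infty}\leq(2+C\norm{H_\alpha}_{L^\infty}^{1/2})\norm{X_\alpha\circ f-X_\beta}$, controls $\norm{H_n}_{L^\infty}$ uniformly along the chain by an induction, and sums to conclude $R_\alpha=R_\beta$, hence $r_\alpha=r_\beta$. Some argument of this kind --- converting the relabeled $L^2$ closeness of the $r$'s into a relabeling-robust, chain-summable quantity --- is needed to complete your proof.
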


\begin{proof} 
  The first part of the proof is identical to \cite{GHR:13} and we reproduce it here for
  convenience. For any $X_\alpha,X_\beta\in\F_0$, we have
  \begin{equation}
    \label{eq:linfcompj} \norm{X_\alpha-X_\beta}_{L^\infty}\leq 2 J(X_\alpha,X_\beta).
  \end{equation} We have
  \begin{align} \notag \norm{X_\alpha-X_\beta}_{L^\infty}&\leq\norm{X_\alpha-X_\alpha\circ
      f}_{L^\infty}+\norm{X_\alpha\circ f-X_\beta}_{L^{\infty}}\\
    \label{eq:xamxbediffg} &\leq
    \norm{X_{\alpha,\xi}}_{L^\infty}\norm{f-\id}_{L^\infty}+\norm{X_\alpha\circ
      f-X_\beta}_{L^{\infty}}.
  \end{align} 
  It follows from the definition of $\F_0$ that $0\leq y_\xi\leq1$, $0\leq H_\xi\leq1$
  and $\abs{U_\xi}\leq1$ so that $\norm{X_{\alpha,\xi}}_{L^\infty}\leq 3$. We also have
  \begin{equation}
    \label{eq:bdfminid}
    \norm{f-\id}_{L^\infty}=\norm{(y_\alpha+H_\alpha)\circ f-(y_\beta+H_\beta)}_{L^\infty}\leq\norm{X_\alpha\circ f-X_\beta}_{L^\infty}.
  \end{equation} Hence, from \eqref{eq:xamxbediffg}, we get
  \begin{equation}
    \label{eq:bdlinfour}
    \norm{X_\alpha-X_\beta}_{L^\infty}\leq4\norm{X_\alpha\circ
      f-X_\beta}_{L^\infty}.
  \end{equation} In the same way, we obtain
  $\norm{X_\alpha-X_\beta}_{L^\infty}\leq4\norm{X_\alpha-X_\beta\circ f}_{L^\infty}$ for any
  $f\in\Gr$. After adding these two last inequalities and taking the infimum, we get
  \eqref{eq:linfcompj}. For any $\epsi>0$, we consider a finite sequence $\{X_n\}_{n=0}^N\subset\F_0$ such that
  $X_0=X_\alpha$ and $X_N=X_\beta$ and $\sum_{i=1}^NJ(X_{n-1},X_n)\leq d(X_\alpha,X_\beta)+\epsi$. We
  have
  \begin{align*} \norm{X_\alpha-X_\beta}_{L^\infty}&\leq
    \sum_{n=1}^{N}\norm{X_{n-1}-X_n}_{L^\infty}\\ &\leq2\sum_{n=1}^{N}J(X_{n-1},X_n)\\
    &\leq2(d(X_\alpha,X_\beta)+\epsi).
  \end{align*} After letting $\epsi$ tend to zero, we get
  \begin{equation}
    \label{eq:LinfbdJ} 
    \norm{X_\alpha-X_\beta}_{L^\infty}\leq 2 d(X_\alpha,X_\beta).
  \end{equation}
  The second inequality in \eqref{eq:dequiv} follows from the definitions of $J$ and $d$. Indeed, we
  have
  \begin{equation*}
    d(X_\alpha,X_\beta)\leq J(X_\alpha,X_\beta)\leq2\norm{X_\alpha-X_\beta}.
  \end{equation*}
  It is left to prove that $d$ defines a metric. The symmetry is intrinsic in the definition of $J$
  while the construction of $d$ from $J$ takes care of the triangle inequality. From
  \eqref{eq:dequiv}, we get that $d(X_\alpha,X_\beta)=0$ implies $(y_\alpha, U_\alpha,
  H_\alpha)=(y_\beta, U_\beta, H_\beta)$. By \eqref{eq:lagcoord3}, we get that
  $r_\alpha^2=r_\beta^2$, but we cannot yet conclude that $r_\alpha=r_\beta$. Let us define
  $R_\alpha(\xi)=\int_{-\infty}^\xi r_\alpha(\eta)e^{-\abs{\eta}}\,d\eta$ and
  $R_\beta(\xi)=\int_{-\infty}^\xi r_\beta(\eta)e^{-\abs{\eta}}\,d\eta$. Then, we have, for any
  $f\in\Gr$,
  \begin{multline}
    R_\alpha(\xi) - R_\beta(\xi) = -\int_{\xi}^{f(\xi)}r_\alpha(\eta) e^{-\abs{\eta}}\,d\eta +
    \int_{-\infty}^\xi r_\alpha\circ f f_\xi (e^{-\abs{f(\eta)}}-e^{-\abs{\eta}})\,d\eta\\ +
    \int_{-\infty}^\xi(r_\alpha\circ f f_\xi -r_\beta) e^{-\abs{\eta}}\,d\eta,
  \end{multline}
  which implies
  \begin{align*}
    \norm{R_{\alpha}-R_{\beta}}_{L^\infty}& \leq \norm{f-\id}_{L^\infty}+
    \norm{\int_{-\infty}^\xi r_\alpha\circ f f_\xi
      (e^{-\abs{f(\eta)}}-e^{-\abs{\eta}})\,d\eta}_{L^\infty}\\
    &\quad+\norm{r_\alpha\circ f f_\xi -r_\beta}_{L^2}.
  \end{align*}
  We have that
  \begin{align*}
    \int_{-\infty}^\xi r_\alpha\circ f f_\xi (e^{-\abs{f(\eta)}}-e^{-\abs{\eta}})\,d\eta &=
    \int_{-\infty}^\xi r_\alpha\circ f f_\xi
    e^{-\abs{f(\eta)}}(1-e^{\abs{f(\eta)}-\abs{\eta}})\,d\eta
  \end{align*}
  implies
  \begin{align*}
    \norm{\int_{-\infty}^\xi r_\alpha\circ f f_\xi (e^{-\abs{f(\eta)}}-e^{-\abs{\eta}})\,d\eta}_{L^\infty}&\leq \norm{e^{\abs{f(\xi)}-\abs{\xi}} - 1}_{L^\infty}\norm{r_\alpha}_{L^2}\norm{e^{-\abs{\xi}}}_{L^2}\\
    &\leq C\norm{r_\alpha}_{L^2} \norm{f-\id}_{L^\infty},
  \end{align*}
  for $C=e$ if we assume that $\norm{f-\id}_{L^\infty}\leq 1$. Since $X_\alpha\in \F_0$ so that
  $y_\xi\leq 1$, we get from \eqref{eq:lagcoord3} that $\norm{r_\alpha}_{L^2}\leq
  \norm{H_\alpha}_{L^\infty}^{1/2}$. Collecting the results obtained so far, we find that
  \begin{equation}
    \label{eq:bdralphabeta}
    \norm{R_{\alpha}-R_{\beta}}_{L^\infty} \leq (2 + C \norm{H_\alpha}_{L^\infty}^{1/2})\norm{X_\alpha\circ f - X_\beta}
  \end{equation}
  for any $\norm{f-\id}_{L^\infty}\leq 1$. Let us now assume that $d(X_\alpha, X_\beta) = 0$. For
  any $\epsi>0$, we can find a sequence such that
  \begin{equation*}
    \sum_{n=1}^N\norm{X_n\circ f_n - X_{n-1}}\leq \epsi.
  \end{equation*}
  Using \eqref{eq:bdfminid} and \eqref{eq:bdlinfour}, we get $\norm{f_n-\id}_{L^\infty}\leq\epsi$
  and prove by induction that
  \begin{equation}
    \label{eq:indstat}
    \norm{H_n}_{L^\infty}\leq\sum_{i=1}^{n}\norm{X_i\circ f_i - X_{i-1}}_{L^\infty} + \norm{H_\alpha}_{L^{\infty}},
  \end{equation}
  for all $n\leq N$. Indeed, we have
  \begin{align*}
    \norm{H_{n+1}}_{L^\infty} & = \norm{H_{n+1}\circ f_{n+1}}_{L^\infty}\\
    &\leq \norm{H_{n+1}\circ f_{n+1} - H_n}_{L^\infty} + \norm{H_n}_{L^\infty}\\
    &\leq \sum_{i=1}^{n + 1}\norm{X_i\circ f_i - X_{i-1}}_{L^\infty} + \norm{H_\alpha}_{L^{\infty}},
  \end{align*}
  after using the induction hypothesis. From \eqref{eq:indstat}, we get
  \begin{equation*}
    \norm{H_n}_{L^\infty}\leq \epsi + \norm{H_\alpha}.
  \end{equation*}
  Hence, by choosing $\epsi\leq 1$, and using repeatedly \eqref{eq:bdralphabeta}, we obtain
  \begin{align*}
    \norm{R_{\alpha}-R_{\beta}}_{L^\infty} &\leq \sum_{n=1}^N \norm{R_{n} - R_{n-1}}_{L^\infty}\\
    &\leq (2 + C(\epsi + \norm{H_\alpha}_{L^\infty})^{1/2})\sum_{n=1}^N\norm{X_\alpha\circ f - X_\beta}    \\
    &\leq (2 + C(\epsi + \norm{H_\alpha}_{L^\infty})^{1/2})\epsi.
  \end{align*}
  After letting $\epsi$ tend to zero, this last inequality implies that $R_\alpha=R_\beta$ so that
  $r_\alpha=r_\beta$, which concludes the proof that $d$ is a metric.
\end{proof}

The Lipschitz estimate for the semigroup $S_t$ given in \eqref{eq:stabSt} is valid for initial data
in $B_M$. Hence, as we want to use the same Lipschitz estimate for any of the $X_n$ in the sequence
defining the metric in \eqref{eq:defdist}, we have to redefine this metric and require that all
$X_n$ belong to $\F_0\cap B_M$. The problem is  that $B_M$ is not preserved by the semigroup
$S_t$, and we will not be able to use the same distance at later times. This is why we introduce the
set
\begin{equation*}
  \F^M=\{X=(y,U,H,r)\in \F\ |\ \norm{H}_{L^\infty}\leq M\},
\end{equation*}
which is preserved by \textit{both} relabeling and the semigroup. Note that $\F^M$ has a simple
physical interpretation as it corresponds to the set of all solutions which have total energy
bounded by $M$. Moreover, following closely the proof of \cite[Lemma 3.4]{GHR:13}, we obtain that
for $X\in \F_0$, the sets $B_M$ and $\F^M$ are in fact equivalent, i.e., there exists $\bar M$
depending only on $M$ such that
\begin{equation}
  \label{eq:eqFBM}
  \F_0\cap\F^M\subset B_{\bar M}. 
\end{equation}
We set $\F_0^M=\F_0\cap\F^M$ and define the metric $d^M$ as follows.
\begin{definition}\label{def:metric} Let $d^M$ be the distance on $\F_0^M$ which is defined, for any
  $X_\alpha,X_\beta\in\F_0^M$, as
  \begin{equation}
    \label{eq:defdM}
    d^M(X_\alpha,X_\beta)=\inf \sum_{n=1}^NJ(X_{n-1},X_n)
  \end{equation} 
  where the infimum is taken over all finite sequences $\{X_n\}_{n=0}^N\subset\F_0^M$ such that
  $X_0=X_\alpha$ and $X_N=X_\beta$.
\end{definition}
\begin{figure}[h]
  \centering
  \includegraphics[width=10cm]{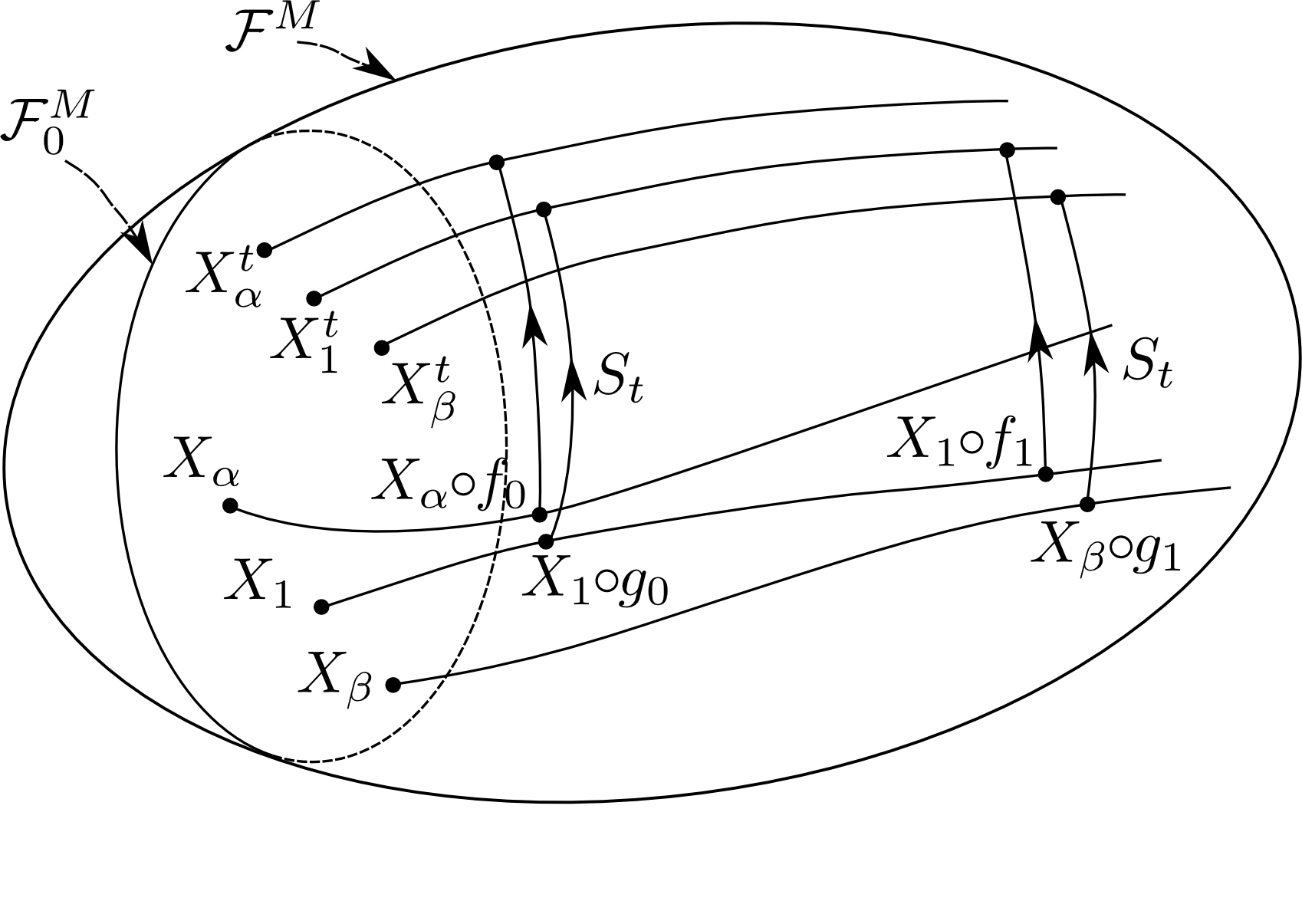}
  \caption{Illustration for the construction of the metric. The \textit{horizontal} curves
    represent points which belong to the same equivalence class.}
  \label{fig:distance}
\end{figure}
We can now state our main stability theorem
\begin{theorem}
  \label{th:stab} Given $T>0$ and $M>0$, there exists a constant $C_{M,T}$ which depends only on $M$
  and $T$ such that, for any $X_\alpha,X_\beta\in\F_0^M$ and $t\in[0,T]$, we have
  \begin{equation}
    \label{eq:stab} d^M(\Pi S_tX_\alpha, \Pi S_tX_\beta)\leq C_{M,T} d^M(X_\alpha,X_\beta).
  \end{equation}
\end{theorem}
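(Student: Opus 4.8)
The plan is to prove the estimate in two pieces, exploiting the structure $\Pi S_t$ together with the already established Lipschitz property of $S_t$ on balls $B_M$ (Theorem~\ref{th:global}), its equivariance (Lemma~\ref{lem:equivPi}), the near-invariance of $J$ under a common relabeling (inequality~\eqref{eq:Jrelabbound}), and the preservation of $\F^M$ by both relabeling and the flow. The metric $d^M$ is defined through chains in $\F_0^M$; since $\F_0\cap\F^M\subset B_{\bar M}$ by~\eqref{eq:eqFBM}, any such chain consists of points in a fixed ball $B_{\bar M}$, so the Lipschitz constant $C_{\bar M}$ from~\eqref{eq:stabSt} applies uniformly to all links of the chain. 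Thus it suffices to bound, for a single pair $X_\alpha,X_\beta\in\F_0^M$, the quantity $J(\Pi S_tX_\alpha,\Pi S_tX_\beta)$ in terms of $\norm{S_tX_\alpha-S_tX_\beta}$ (which, by~\eqref{eq:stabSt}, is $\le C_{\bar M}\norm{X_\alpha-X_\beta}$), and then combine this with the chain definition of $d^M$.

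\medskip

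The first main step is therefore a \emph{pointwise} estimate: show that there is a constant $C$, depending only on $M$ (hence on $\bar M$), such that for all $X,\tilde X\in\F_0^M$ and $t\in[0,T]$,
\begin{equation*}
  J(\Pi S_tX,\Pi S_t\tilde X)\le C\,\norm{S_tX-S_t\tilde X}.
\end{equation*}
The idea is that $\Pi S_tX=S_tX\circ(y+H)^{-1}$ where $(y,U,H,r)=S_tX$, so $\Pi S_tX$ and $\Pi S_t\tilde X$ differ from $S_tX$ and $S_t\tilde X$ only by relabelings $f=(y+H)^{-1}$ and $g=(\tilde y+\tilde H)^{-1}$. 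Since $S_tX,S_t\tilde X\in\F^M\cap B_{\bar M}$, these relabeling functions lie in some $\Gr_\kappa$ with $\kappa=\kappa(M,T)$; a bound of the form $\norm{f-g}_{W^{1,\infty}}\le C\norm{S_tX-S_t\tilde X}$ follows because $y+H$ and $\tilde y+\tilde H$ are close in $E$ and uniformly bi-Lipschitz. Taking $f_1=g\circ f^{-1}$ and $f_2=f\circ g^{-1}$ in the definition~\eqref{eq:defJ} of $J$, one estimates $\norm{\Pi S_tX\circ f_1-\Pi S_t\tilde X}=\norm{(S_tX)\circ g- (S_t\tilde X)\circ g}$ using the relabeling-invariance of the $B$-type norm together with the control of the extra terms coming from the $L^2$ part of the norm (this is exactly the kind of computation carried out in the proof of Lemma~\ref{lem:distance} when handling $R_\alpha-R_\beta$, and also the content of~\eqref{eq:Jrelabbound}).

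\medskip

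With the pointwise estimate in hand, the second step is to propagate it along chains. Given $\epsi>0$, pick a chain $\{X_n\}_{n=0}^N\subset\F_0^M$ from $X_\alpha$ to $X_\beta$ with $\sum_n J(X_{n-1},X_n)\le d^M(X_\alpha,X_\beta)+\epsi$. For each $n$, $J(X_{n-1},X_n)$ is itself almost realized by relabelings $f_1^n,f_2^n$; the points $S_tX_{n-1}$ and $S_tX_n$ then lie in $B_{\bar M}$ and, using equivariance $S_t(X\circ f)=S_t(X)\circ f$ together with~\eqref{eq:stabSt}, one gets $\norm{S_t(X_{n-1}\circ f_1^n)-S_t X_n}\le C_{\bar M}\norm{X_{n-1}\circ f_1^n-X_n}$ and similarly for the other term, so that (applying the pointwise estimate and~\eqref{eq:Jrelabbound}) $J(\Pi S_tX_{n-1},\Pi S_tX_n)\le \tilde C\,J(X_{n-1},X_n)$ with $\tilde C=\tilde C(M,T)$. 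Since $\{\Pi S_tX_n\}$ is an admissible chain in $\F_0^M$ from $\Pi S_tX_\alpha$ to $\Pi S_tX_\beta$ (because $\F^M$ is preserved by the flow and $\Pi$ maps into $\F_0$), summing gives $d^M(\Pi S_tX_\alpha,\Pi S_tX_\beta)\le\sum_n J(\Pi S_tX_{n-1},\Pi S_tX_n)\le\tilde C(d^M(X_\alpha,X_\beta)+\epsi)$; letting $\epsi\to0$ yields~\eqref{eq:stab} with $C_{M,T}=\tilde C$.

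\medskip

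The main obstacle is the pointwise estimate of the first step, specifically controlling $\norm{\Pi S_tX-\Pi S_t\tilde X}$-type quantities by $\norm{S_tX-S_t\tilde X}$: the projection $\Pi$ is known only to be continuous, not Lipschitz, on $\F$, so one cannot simply invoke Lipschitz continuity of $\Pi$. The point is that we do not need $J(\Pi S_tX,\Pi S_t\tilde X)\le C\norm{\Pi S_tX-\Pi S_t\tilde X}$; rather we compare through the unprojected representatives $S_tX,S_t\tilde X$ and use that $J$ only sees the equivalence classes up to the weak relabeling bound~\eqref{eq:Jrelabbound}. The delicate part is that the relabelings $(y+H)^{-1}$ depend on the solution, so $f-g$ must be controlled in $W^{1,\infty}$ uniformly in $t\in[0,T]$; this requires the uniform bi-Lipschitz bounds on $y+H$ coming from membership in $B_{\bar M}\cap\F^M$, together with a careful treatment of the $r$-component, whose relabeled form $r\circ f f_\xi$ mixes $r$ and $f_\xi$ and must be estimated in $L^2$ exactly as in the $R_\alpha-R_\beta$ argument of Lemma~\ref{lem:distance}.
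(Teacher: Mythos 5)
Your overall strategy coincides with the paper's: take a near-optimal chain in $\F_0^M$, apply the ball-restricted Lipschitz estimate \eqref{eq:stabSt} link by link to suitably relabeled representatives, use the equivariance of $S_t$ and the invariance of $\Pi$ to recognize $\{\Pi S_tX_n\}$ as an admissible competitor chain (it stays in $\F_0^M$ because $\F^M$ is preserved by both the flow and relabeling), and sum. The paper's displayed proof carries this out only under the simplifying assumption \eqref{eq:invnorm}, where $\bar J$ is available and invariant; your version works with $J$ directly and is therefore closer to the full argument of \cite{GHR:13} to which the paper defers. The architecture is sound, but two points in your write-up need repair.

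First, the claim in your step 1 that $\norm{f-g}_{W^{1,\infty}}\leq C\norm{S_tX-S_t\tilde X}$ for $f=(y+H)^{-1}$ and $g=(\tilde y+\tilde H)^{-1}$ is false: the $E$-norm controls $(y+H)_\xi-(\tilde y+\tilde H)_\xi$ only in $L^2$, never in $L^\infty$, and a linear $W^{1,\infty}$ bound of this kind would essentially make $\Pi$ Lipschitz, which the paper explicitly states it cannot prove. Fortunately you never actually need it: what your computation requires is only that $f$ and $g$ \emph{separately} lie in a fixed $\Gr_{\kappa(M,T)}$, so that relabeling two elements by one and the same function in $\Gr_\kappa$ is a Lipschitz operation on $E$ with constant depending on $\norm{g_\xi}_{L^\infty}$ (this is the mechanism behind \eqref{eq:Jrelabbound}). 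That in turn rests on a quantitative fact you should state explicitly, namely that $S_t$ maps $\F_0\cap B_{\bar M}$ into $\F_{\kappa(M,T)}$, i.e., $(y+H)_\xi$ stays bounded above and away from zero along the flow. Second, in step 2 your pointwise estimate is formulated for the pair $(S_tX_{n-1},S_tX_n)$, whereas what \eqref{eq:stabSt} actually controls is $\norm{S_t(X_{n-1}\circ f_1^n)-S_tX_n}$ with $f_1^n$ an arbitrary near-optimizer of $J(X_{n-1},X_n)$; the quantity $\norm{S_tX_{n-1}-S_tX_n}$ itself need not be small even when $J(X_{n-1},X_n)$ is. Moreover $S_t(X_{n-1}\circ f_1^n)$ need not lie in any controlled $\F_\kappa$, since $f_1^n$ is unrestricted in $\Gr$. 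The fix is to bound $J(\Pi S_tX_{n-1},\Pi S_tX_n)$ by choosing in \eqref{eq:defJ} the relabeling $h_{n-1}^{-1}\circ f_1^n\circ h_n$, where $h_j=(y_j^t+H_j^t)^{-1}\in\Gr_{\kappa(M,T)}$, so that the two objects being compared in the $E$-norm are $\bigl(S_tX_{n-1}\circ f_1^n\bigr)\circ h_n$ and $\bigl(S_tX_n\bigr)\circ h_n$ --- a common, controlled relabeling --- and only then invoke the Lipschitz property of relabeling by $h_n$. With these corrections your proof goes through and reproduces the paper's result.
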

In fact due to the use of equivalent notations, the proof of the theorem is identical to
\cite[Theorem 3.6]{GHR:13}. Here, we propose to present a simplified proof where we assume that the
norm of $E$ is invariant with respect to relabeling, that is, \eqref{eq:invnorm} holds. By doing so,
we hope that some general ideas behind the construction of the metric becomes clearer.  Much of the
construction can be understood from the illustration in Figure~\ref{fig:distance}.  In this figure,
we denote $X_\alpha^t=\Pi S_t(X_\alpha\circ f_0)$, $X_\beta^t=\Pi S_t(X_\beta\circ g_1)$ and
$X_1^t=\Pi S_t(X_1\circ g_0)=\Pi S_t(X_1\circ f_1)$. Let us imagine the (very improbable) case where
the infimum in \eqref{eq:defdM} and the infimum in \eqref{eq:defJ} both are reached, so that
$d^M(X_\alpha, X_\beta) = \norm{X_\alpha\circ f_0-X_1\circ g_0} + \norm{X_1\circ f_1-X_\beta\circ
  g_1}$. Then, we have
\begin{align*}
  d^M(X_\alpha^t,X_\beta^t)&\leq J(X_\alpha^t,X_1^t) + J(X_1^t,X_\beta^t)\\
  &= J(S_t(X_\alpha\circ f_0), S_t(X_1\circ g_0)) + J(S_t(X_1\circ f_1),S_t(X_\beta\circ g_1))\\
  &\leq \norm{S_t(X_\alpha\circ f_0)-S_t(X_1\circ g_0)} + \norm{S_t(X_1\circ f_1)-S_t(X_\beta\circ
    g_1)}\\
  &\leq C_{M,T} \big(\norm{X_\alpha\circ f_0-X_1\circ g_0} + \norm{X_1\circ f_1-X_\beta\circ g_1}\big)\\
  &= C_{M,T} d^M(X_\alpha, X_\beta),
\end{align*}
which corresponds to the Lipschitz estimate of Theorem \ref{th:stab}.

\begin{proof}[Simplified proof of Theorem \ref{th:stab}] As we mentioned earlier, when the norm is
  invariant, then $J$ and $\bar J$ are equivalent. Here, it is simpler to consider $\bar J$. For any
  $\epsi>0$, there exist a finite sequence $\{X_n\}_{n=0}^N$ in $\F_0^M$ and functions
  $\{f_n\}_{n=0}^{N-1}$, $\{g_n\}_{n=0}^{N-1}$ in $\Gr$ such that $X_0=X_\alpha$, $X_N=X_\beta$ and
  \begin{equation}
    \label{eq:sumXnm1}
    \sum_{i=1}^N\norm{X_{n-1}\circ f_{n-1}-X_{n}\circ g_{n-1}}\leq
    d_M(X_\alpha,X_\beta)+\epsi.
  \end{equation}
  Since $B_{\bar M}$, where $\bar M$ is defined so that \eqref{eq:eqFBM} holds, is preserved by
  relabeling, we have that $X_{n}\circ f_n$ and $X_{n}\circ g_{n-1}$ belong to $B_{\bar M}$. From
  the Lipschitz stability result given in \eqref{eq:stabSt}, we obtain that
  \begin{equation}
    \label{eq:normSxnm1}
    \norm{S_t(X_{n-1}\circ f_{n-1})-S_t(X_{n}\circ g_{n-1})}\leq C_{M,T} \norm{X_{n-1}\circ f_{n-1}-X_{n}\circ g_{n-1}},
  \end{equation}
  where the constant $C_{M,T}$ depends only on $M$ and $T$. Introduce
  \begin{equation*}
    \bar X_n=X_n\circ f_n,\  \bar X_n^t=S_t(\bar X_n), \text{ for }n=0,\ldots,N-1,
  \end{equation*}
  and
  \begin{equation*}
    \tilde X_n=X_{n}\circ g_{n-1},\ \tilde X_n^t=S_t(\tilde X_n), \text{ for }n=1,\ldots,N.
  \end{equation*}
  Then \eqref{eq:sumXnm1} rewrites as
  \begin{equation}
    \label{eq:sumXnm1b}
    \sum_{i=1}^N\norm{\bar X_{n-1}-\tilde X_{n}}\leq
    d_M(X_\alpha,X_\beta)+\epsi
  \end{equation}
  while \eqref{eq:normSxnm1} rewrites as
  \begin{equation}
    \label{eq:normSnnot}
    \norm{\bar X_{n-1}^t-\tilde X_{n}^t}\leq C_{M,T} \norm{\bar X_{n-1}-\tilde X_{n}}.
  \end{equation}
  We have
  \begin{equation*}
    \Pi(\bar X_0^t)=\Pi\circ S_t(X_0\circ f_0)=\Pi\circ (S_t(X_0)\circ f_0)=\Pi\circ S_t(X_0)=\bar S_t(X_\alpha)
  \end{equation*}
  and similarly $\Pi(\tilde X_N^t)=\Pi S_t(X_\beta)$. We consider the sequence which consists of
  $\{\Pi \bar X_n^t\}_{n=0}^{N-1}$ and $\bar S_t(X_\beta)$. Using the property that $\F^M$ is
  preserved both by relabeling and by the semigroup, we obtain that $\{\Pi \bar X_n^t\}_{n=0}^{N-1}$
  and $\bar S_t(X_\beta)$ belong to $\F^M$ and therefore also to $\F_0^M$. The endpoints are $\Pi
  S_t(X_\alpha)$ and $\Pi S_t(X_\beta)$. From the definition of the metric $d_M$, we get
  \begin{align}
    \notag d_M(\bar S_t(X_\alpha),\bar S_t(X_\beta))&\leq\sum_{n=1}^{N-1} \bar J(\Pi \bar
    X_{n-1}^t,\Pi\bar X_n^t)+\bar J(\Pi \bar
    X_{N-1}^t,\bar S_t(X_\beta))\\
    \label{eq:dtilS}
    &=\sum_{n=1}^{N-1} \bar J(\bar X_{n-1}^t,\bar X_n^t)+\bar J(\bar X_{N-1}^t,\tilde
    X_N^t),
  \end{align}
  due to the invariance of $\bar J$ with respect to relabeling.  By using the
  equivariance of $S_t$, we obtain that
  \begin{equation}
    \label{eq:tilXrelbarX}
    \begin{aligned}
      \tilde X_n^t=S_t(\tilde X_n)&=S_t((\bar X_n\circ f_n^{-1})\circ g_{n-1})\\
      &=S_t(\bar X_n)\circ (f_n^{-1}\circ g_{n-1})=\bar X_n^t\circ (f_n^{-1}\circ g_{n-1}).
    \end{aligned}
  \end{equation}
  Hence we get from \eqref{eq:dtilS} that
  \begin{align*}
    d_M(\bar S_t(X_\alpha),\bar S_t(X_\beta))&\leq\sum_{n=1}^{N-1} \bar J(\bar X_{n-1}^t,\tilde
    X_n^t)+\bar J(\bar
    X_{N-1}^t,\tilde X_N^t)\\
    &\leq \sum_{n=1}^N\norm{\bar X_{n-1}^t-\tilde
      X_n^t} &\text{ by \eqref{eq:dequiv} }\\
    &\leq C_{M,T} \sum_{n=1}^N\norm{\bar X_{n-1}-\tilde
      X_n} &\text{  by \eqref{eq:normSnnot}}\\
    &\leq C_{M,T} (d_M(X_\alpha,X_\beta)+\epsi).
  \end{align*}
  After letting $\epsi$ tend to zero, we obtain \eqref{eq:stab}.
\end{proof}

The Lipschitz stability of the semigroup $T_t$ follows then naturally from Theorem~\ref{th:stab}. It
holds on sets of bounded energy. Let $\D^M$ be the subsets of $\D$ defined as
\begin{equation}
  \D^M=\{ (u,\rho,\mu)\in\D \mid \mu(\Real)\leq M\}.
\end{equation} 
On the set $\D^M$ we define the metric $d_{\D^M}$ as
\begin{equation}\label{eq:defDM}
  d_{\D^M}((u,\rho,\mu),(\tilde u, \tilde\rho, \tilde\mu))
  =d^M  (L(u,\rho,\mu), L(\tilde  u,\tilde\rho,\tilde \mu)),
\end{equation} 
where the metric $d^M$ is defined as in Definition~\ref{def:metric}. This definition is well-posed
as, from the definition of $L$, we have that if $(u,\rho,\mu)\in \D^M$, then $L(u,\rho,\mu)\in
\F_0^M$.
\begin{theorem}
  The semigroup $(T_t, d_\D)$ is a continuous semigroup on $\D$ with respect to the metric
  $d_D$. The semigroup is Lipschitz continuous on sets of bounded energy, that is: Given $M>0$ and a
  time interval $[0,T]$, there exists a constant $C_{M,T}$, which only depends on $M$ and $T$ such
  that for any $(u,\rho,\mu)$ and $(\tilde u,\tilde\rho,\tilde \mu)$ in $\D^M$, we have
  \begin{equation}
    d_{\D^M}(T_t(u,\rho,\mu),T_t(\tilde u, \tilde\rho, \tilde\mu))\leq C_{M,T}d_{\D^M}((u,\rho,\mu),(\tilde u,\tilde\rho,
    \tilde \mu))
  \end{equation} for all $t\in [0,T]$. Let $(u,\rho,\mu)(t)=T_t(u_0,\rho_0,\mu_0)$, then $(u(t,x),\rho(t,x))$ is weak solution
  of the Camassa--Holm equation \eqref{eq:rewchsys10}.
\end{theorem}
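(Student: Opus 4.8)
The plan is to deduce the final theorem almost immediately from Theorem~\ref{th:stab}, Theorem~\ref{th:global}, and the properties of the maps $L$, $M$, $\Pi$ that have been established in the preceding sections and cited from \cite{GHR:12}. The three assertions to be proved are: (i) continuity of $(T_t,d_\D)$ on all of $\D$; (ii) Lipschitz continuity of $T_t$ on $\D^M$ with respect to $d_{\D^M}$; and (iii) that the Eulerian pair $(u,\rho)$ obtained is a weak solution of \eqref{eq:rewchsys10}. I would treat them in the order (iii), (ii), (i).

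First, for the weak-solution statement (iii), I would simply invoke the construction $T_t=M\Pi S_tL$ together with the fact, recalled at the start of Section~\ref{sec:lip}, that this is exactly the conservative solution constructed in \cite[Theorem~5.2]{GHR:12}; hence $(u,\rho)(t)=T_t(u_0,\rho_0,\mu_0)$ is a weak solution of \eqref{eq:rewchsys10} and $\mu_t(\Real)=\mu_0(\Real)$, so in particular the flow maps $\D^M$ into itself. Next, for (ii), I would unwind the definition \eqref{eq:defDM}. Set $X_\alpha=L(u,\rho,\mu)$, $X_\beta=L(\tilde u,\tilde\rho,\tilde\mu)$; since $(u,\rho,\mu),(\tilde u,\tilde\rho,\tilde\mu)\in\D^M$ we have $X_\alpha,X_\beta\in\F_0^M$ (this is the well-posedness remark made right after \eqref{eq:defDM}). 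Because $L$ is a right inverse of $M$ on the section $\F_0$ and because $\Pi L = L$ (as $L$ already lands in $\F_0$), one has $L\circ T_t = L\circ M\Pi S_t L = \Pi S_t L$ on $\D^M$; thus
\begin{equation*}
  d_{\D^M}(T_t(u,\rho,\mu),T_t(\tilde u,\tilde\rho,\tilde\mu))
  = d^M\big(\Pi S_t X_\alpha,\Pi S_t X_\beta\big).
\end{equation*}
Now Theorem~\ref{th:stab} applies directly (using that $\F^M$, and hence $\F_0^M$, is preserved by both the semigroup and relabeling), giving
\begin{equation*}
  d^M(\Pi S_t X_\alpha,\Pi S_t X_\beta)\le C_{M,T}\, d^M(X_\alpha,X_\beta)
  = C_{M,T}\, d_{\D^M}\big((u,\rho,\mu),(\tilde u,\tilde\rho,\tilde\mu)\big),
\end{equation*}
which is the claimed Lipschitz bound for $t\in[0,T]$.

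For the continuity statement (i) with respect to the metric $d_\D$ of \eqref{eq:defdD1}, I would argue as in \cite[Theorem~5.2]{GHR:12}: the map $L\colon(\D,d_\D)\to(\F_0,\|\cdot\|_E)$ is an isometry by the very definition \eqref{eq:defdD1}; $S_t\colon\G\cap B_M\to\G$ is Lipschitz by Theorem~\ref{th:global}; the projection $\Pi$ is continuous on $\F$ by \cite[Lemma~4.6]{GHR:12}; and $M$ is continuous (indeed it is the inverse bijection of $L$ on $\F_0$). Composing, $T_t=M\Pi S_t L$ is continuous from $(\D,d_\D)$ to itself; one should note that $d_\D$ and $d_{\D^M}$ induce the same topology on $\D^M$ (both are controlled above and below by the $E$-norm of the $L$-images, via \eqref{eq:dequiv} and \eqref{eq:eqFBM}), so the Lipschitz estimate in (ii) is genuinely a refinement of the continuity in (i) on bounded-energy sets. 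I do not expect any serious obstacle here; the only point that requires a little care is the algebraic identity $L\circ T_t=\Pi S_t L$ on $\D^M$, i.e. checking that $L$ and $M$ are honestly mutually inverse bijections between $\D$ and $\F_0$ and that $\Pi$ fixes $\F_0$ pointwise — all of which is recorded in Sections~\ref{sec:relab} and~\ref{sec:euler} and in \cite{GHR:12}. The substantive work has already been done in Theorem~\ref{th:stab}; this final theorem is the bookkeeping that transports that estimate from the section $\F_0^M$ to the Eulerian phase space $\D^M$.
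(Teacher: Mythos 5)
Your proposal is correct and follows essentially the same route as the paper, which leaves this theorem's proof implicit: the weak-solution and conservation statements are imported from \cite[Theorem 5.2]{GHR:12}, the continuity on all of $\D$ comes from composing the isometry $L$, the Lipschitz semigroup $S_t$, the continuous (but not Lipschitz) projection $\Pi$, and $M$, and the Lipschitz bound on $\D^M$ is pure bookkeeping via $L\circ T_t=\Pi S_t L$ and Theorem~\ref{th:stab}. One small caution: your aside that $d_\D$ and $d_{\D^M}$ induce the same topology on $\D^M$ is not justified by \eqref{eq:dequiv}, which only bounds the $L^\infty$ part of the $E$-norm from below by $d$; this claim is not needed for any of the three assertions, so it is harmless, but as stated it overreaches.
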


We conclude the section about this metric by mentioning that, even if the construction of the metric
is abstract, it can be compared with standard norms, cf. \cite[Section 5]{GHR:13}, so that it can be
used in practice, for example in the study of numerical schemes \cite{CR:12,HR:08}.

\end{document}